\titlespacing{\section}{10pt}{10pt}{10pt} % this reduces space between (sub)sections to 0pt, for example
\newcommand{\tpmod}[1]{{\@displayfalse\pmod{#1}}}
\newtheorem{theorem}{Theorem}[section]
\newtheorem{lemma}[theorem]{Lemma}
\newtheorem{definition}[theorem]{Definition}
\theoremstyle{definition}
\newtheorem{personalEnvironment}[theorem]{$\boldsymbol{(}\mkern-3mu$}
\newcommand{\Sym}[1]{\mathrm{Sym}({#1})}
\newcommand{\Dih}[1]{\mathrm{Dih}({#1})}
\newcommand{\SL}{\mathrm{SL}}
\renewcommand{\O}{\mathrm{O}}
\newcommand{\B}{\mathrm{B}}
\title{Two Families of Unravelled Abstract Regular Polytopes in $\B_n$}
\date{2020}
\author{Robert Nicolaides and Peter Rowley}
\begin{document}
\maketitle
\begin{abstract}

This paper exhibits two families of unravelled abstract regular polytopes in Coxeter groups of type $\B_n$. For one family they have rank 4 while the other family has arbitrarily large rank.
\end{abstract}
\section{Introduction}

Unravelled abstract regular polytopes were first introduced and studied in \cite{UNravlledPaper1}. In that paper, among other things, certain unravelled polytopes for $\SL_3(q) \rtimes \langle t \rangle,$ $t$ being the transpose inverse automorphism of $\SL_3(q)$, were analyzed. Here we continue investigating unravelled polytopes this time in Coxeter groups of type $\B_n$.

The study of abstract regular polytopes may be carried out entirely in a group theoretic environment. The parallel group theoretic concept is that of a C-string. See \cite{ARP} for details of this reformulation. For a finite group $G$ and $\{ t_1, \ldots , t_n\}$ a set of involutions is $G$, set $I = \{1, \ldots ,n\}$ and for $J \subseteq I$ $G_J = \langle t_j | j \in J \rangle$, with $G_{\emptyset} = 1$ by convention. If $J = \{j_1, \ldots , j_k\}$, then $G_J$ is sometimes written as $G_{j_1 \ldots  j_k}$. A C-string of rank $n$ for $G$ is a set of involutions $\{t_1, \ldots , t_n\}$ in $G$ such that \begin{enumerate}[(i)]
    \item $G = \langle t_1, \ldots t_n \rangle$
    \item $t_i$ and $t_j$ commute whenever $| i - j| \ge 2;$
    and 
    \item for $J,K \subseteq I,$ $G_J \cap G_K = G_{J \cap K}.$
\end{enumerate}
We can now define unravelled C-strings and unravelled abstract regular polytopes.

\begin{definition}\label{def1.1}
Suppose $G$ is a finite group with $\{t_1,\ldots,t_n\}$ a C-string for $G$ of rank $n$.
\begin{enumerate}[$(i)$]
    \item For $N \trianglelefteq G,$ set $\overline{G} = G/N$ and use $\overline{g}$ for the image of $g$ in $\overline{G}$, $g \in G$. If either $|\{\overline{t_1}, \ldots, \overline{t_n}\}| < n$ or, $|\{\overline{t_1}, \ldots, \overline{t_n}\}| = n$ and $\{\overline{t_1}, \ldots, \overline{t_n}\}$ is not a C-string for $\overline{G},$ we say that  $\{t_1,\ldots,t_n\}$ is an $N$-unravelled C-string for $G$.
    \item If $\{t_1,\ldots,t_n\}$ is an $N$-unravelled 
    C-string for all non-trivial proper normal 
    subgroups $N$ of $G$, we call $\{t_1,\ldots,t_n\}$ 
    an unravelled C-string for $G$, and refer to the 
    associated abstract regular polytope as being 
    unravelled. 
\end{enumerate}
\end{definition}

Our first theorem, just as for those on $\SL_3(q) \rtimes \langle t \rangle,$ yields unravelled C-strings of rank 4.

\begin{theorem}\label{thm1.2}
Suppose that $G=\B_n$ where $n$ is odd and $n \ge 5$. Then $G$ has a rank 4 C-string $\{t_1,t_2,t_3,t_4\}$ with Schlafli symbol 

\begin{center}
    
\tikzstyle{int}=[draw, fill=black!20, minimum size=2em]
\tikzstyle{init} = [pin edge={to-,thin,black}]
\begin{tikzpicture}[baseline={(0,-1.5)}]

\draw (3,-0.5) -- (6,-0.5);
\draw (6,-0.5) -- (9,-0.5);
\draw (9,-0.5) -- (12,-0.5);
\filldraw [black] (3,-0.5) circle (3pt);
\filldraw [black] (6,-0.5) circle (3pt);
\filldraw [black] (9,-0.5) circle (3pt);
\filldraw [black] (12,-0.5) circle (3pt);
\node (p1) at (4.5,-0.2) {$2n-4$};
\node (p2) at (7.5,-0.2) {$6$};
\node (p3) at (10.5,-0.2) {$4$};

\node (t1) at (3,-1) {$t_1$};
\node (t2) at (6,-1) {$t_2$};
\node (t3) at (9,-1) {$t_3$};
\node (t4) at (12,-1) {$t_4$};
\end{tikzpicture},
\end{center}
which is unravelled when $n > 5.$ Further $G_{123} \cong \Sym{n}$ and $G_{234} \cong \mathbb{Z}_2 \times \Sym{5}.$
\end{theorem}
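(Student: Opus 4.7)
The plan is to construct explicit involutions $t_1,t_2,t_3,t_4 \in \B_n$, viewing $\B_n$ as the group of signed permutations $\mathbb{Z}_2^n \rtimes \Sym{n}$, and then to verify in turn: the orders of the $t_it_j$ together with the commutation relations, generation of $\B_n$, the C-string intersection property, the two stated subgroup isomorphisms, and finally the unravelling property for $n>5$.

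\textbf{Construction.} Since $G_{123}\cong\Sym{n}$, I would place $t_1,t_2,t_3$ inside the $\Sym{n}$ complement of $\B_n$. The requirement $|t_1t_2|=2n-4=2(n-2)=\mathrm{lcm}(2,n-2)$ (using that $n$ is odd) suggests taking $t_1t_2$ to be a disjoint product of a transposition on $\{1,2\}$ with an $(n-2)$-cycle on $\{3,\ldots,n\}$. Writing that $(n-2)$-cycle as a product of two reflections in the natural dihedral action on $\{3,\ldots,n\}$ and prepending the transposition on $\{1,2\}$ to one of those reflections yields candidate involutions $t_1,t_2$. The third generator $t_3\in\Sym{n}$ is then pinned down by $[t_1,t_3]=1$, $|t_2t_3|=6$, and $\langle t_1,t_2,t_3\rangle=\Sym{n}$. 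Finally $t_4$ involves a sign change in $\mathbb{Z}_2^n$: it commutes with $t_1$ and $t_2$, satisfies $|t_3t_4|=4$, and its sign-changing support is dictated by $G_{234}\cong\mathbb{Z}_2\times\Sym{5}$.

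\textbf{Axioms and subgroup structure.} Once explicit formulas are in hand, the commutations $[t_1,t_3]=[t_1,t_4]=[t_2,t_4]=1$ and the Coxeter orders reduce to direct computation with signed permutations. Generation of $\B_n$ follows from $\langle t_1,t_2,t_3\rangle=\Sym{n}$ together with the observation that the $\Sym{n}$-orbit of the sign-change part of $t_4$ spans the whole base group $\mathbb{Z}_2^n$. For the C-string intersection axiom the containment $G_{J\cap K}\subseteq G_J\cap G_K$ is automatic; the reverse inclusion only needs checking for a handful of subset pairs and is resolved by order-counting using the identifications of $G_{123}$ and $G_{234}$ (each itself verified by exhibiting explicit isomorphisms onto the claimed groups and matching orders).

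\textbf{Unravelling.} For $n>5$ odd, I would enumerate the nontrivial proper normal subgroups $N$ of $\B_n$; these arise from combining a $\Sym{n}$-invariant subgroup of the base group (one of $1$, $\langle z\rangle$ with $z=(-1,\ldots,-1)$, the even-weight subgroup $K_0$, or $\mathbb{Z}_2^n$) with the preimage of a normal subgroup of $\Sym{n}$ (one of $1$, $\Alt{n}$, $\Sym{n}$). For each such $N$ I would show that either two of the $\overline{t_i}$ coincide or the C-string intersection property fails in $\overline{G}$. The key observation is that $t_4$ is the only generator carrying a sign change, so any $N$ killing its sign-change part forces $\overline{t_4}\in\overline{G_{123}}$ and breaks the intersection axiom, while any $N\supseteq\Alt{n}$ collapses $\overline{t_1t_2}$ in $\overline{G_{123}}$ and destroys the Schl\"afli symbol.

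The principal obstacle I anticipate is calibrating the explicit $t_i$ so that the product orders are \emph{exactly} $2n-4,6,4$ and $G_{234}$ is precisely $\mathbb{Z}_2\times\Sym{5}$ rather than a larger quotient of the Coxeter group $[2n-4,6,4]$. The excluded case $n=5$ is consistent with this plan because then $2n-4=6$ coincides with the middle label, producing an additional symmetry of the generating set and, with it, a normal subgroup whose quotient happens to remain a valid rank-$4$ C-string.
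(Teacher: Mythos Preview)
Your high-level plan is essentially the paper's: construct explicit involutions (the paper works in the $\Sym{2n}$ model of $\B_n$, with $t_1,t_2,t_3$ lying in the diagonal $\Sym{n}$ and $t_4$ carrying the sign changes), check orders and commutations directly, identify $G_{123}\cong\Sym{n}$ and $G_{234}\cong\mathbb{Z}_2\times\Sym{5}$, prove generation by exhibiting the standard Coxeter generators $\beta_i$ as words in the $t_j$, and reduce the intersection property to the single check $G_{123}\cap G_{234}=G_{23}$.

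Your unravelling sketch, however, has a genuine gap. Your two prongs cover (a) any $N$ that kills the sign-change part of $t_4$, and (b) any $N$ containing $\Alt{n}$. But for $n$ odd the sign support of $t_4$ has weight $n-2$, which is odd, so neither the centre $\langle\omega_0\rangle$ nor the even-weight subgroup $M_2$ kills it; and neither contains $\Alt{n}$. These two normal subgroups therefore slip through both prongs, and they are exactly the delicate cases. The paper handles them not by showing $\overline{t_4}\in\overline{G_{123}}$, but by producing explicit witnesses $g\in G_{234}$ and $h\in G_{123}$ with $gh^{-1}\in N$ yet $\overline{g}\notin\overline{G_{23}}$: for $M_1$ and $M_2$ one can take $g=(t_3t_4)^2$, while $N=\langle\omega_0\rangle$ requires a separate pair whose construction only works for $n\ge 7$. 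Your proposed explanation for the exclusion of $n=5$ via the label coincidence $2n-4=6$ is not the operative mechanism; it is this $\langle\omega_0\rangle$ witness that degenerates there.
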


Our second theorem, by contrast, gives examples in $\B_n$ of arbitrary large rank.

\begin{theorem}\label{thm1.3}
Suppose that $G=\B_n$ where $n \ge 8,$ and set $m=n-4.$ Then $G$ has a rank $m$ C-string $\{t_1,t_2, \ldots,t_m\}$ with Schlafli symbol

\tikzstyle{int}=[draw, fill=black!20, minimum size=2em]
\tikzstyle{init} = [pin edge={to-,thin,black}]

\begin{center}
\begin{tikzpicture}

\draw (0,0) -- (2,0);
\draw (2,0) -- (4,0);
\draw (4,0) -- (6,0);
\draw (6,0) -- (8,0);
\draw[loosely dotted] (8,0) -- (10,0);
\draw (10,0) -- (12,0);
\filldraw [black] (0,0) circle (3pt);
\filldraw [black] (2,0) circle (3pt);
\filldraw [black] (4,0) circle (3pt);
\filldraw [black] (6,0) circle (3pt);
\filldraw [black] (8,0) circle (3pt);
\filldraw [black] (10,0) circle (3pt);
\filldraw [black] (12,0) circle (3pt);
\node (p1) at (1,0.5) {$12$};
\node (p2) at (1+2,0.5) {$12$};
\node (p3) at (1+4,0.5) {$6$};
\node (p4) at (1+6,0.5) {$3$};;
\node (p6) at (1+8,0.5) {};
\node (p7) at (1+10,0.5) {$3$};
\node (t1) at (0,-0.5) {$t_1$};
\node (t2) at (2,-0.5) {$t_2$};
\node (t3) at (4,-0.5) {$t_3$};
\node (t4) at (6,-0.5) {$t_4$};
\node (t5) at (8,-0.5) {$t_5$};
\node (t6) at (10,-0.5) {$t_{m-1}$};
\node (t6) at (12,-0.5) {$t_m$};
\end{tikzpicture}.
\end{center}
Further, when $n$ is even, this C-string is unravelled.
\end{theorem}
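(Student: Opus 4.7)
My plan is to exhibit the involutions $t_1,\ldots,t_m$ explicitly as signed permutations in $\B_n$, verify the three C-string axioms, and then check the unravelled property when $n$ is even. Identifying $\B_n$ with the group of signed permutations of $\{\pm 1,\ldots,\pm n\}$, the natural choice for the $[3,3,\ldots,3]$ tail is to take $t_4,t_5,\ldots,t_m$ as a chain of $m-3$ adjacent transpositions on the last $n-6$ coordinates, so that $\langle t_4,\ldots,t_m\rangle\cong\Sym{n-6}$ with its standard Coxeter presentation, and the commutations $[t_i,t_j]=1$ for $i,j\ge 4$ with $|i-j|\ge 2$ are then automatic. The remaining $t_1,t_2,t_3$ should be supported on the first six coordinates (plus possibly one bridging coordinate), chosen so that $\langle t_1,t_2,t_3\rangle$ is a dihedral-type subgroup of $\B_6$ in which the orders $12$ arise naturally from a $3$-cycle combined with the global sign involution $-I_6$, and so that $t_3$ and $t_4$ together generate a dihedral group of order $12$.

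With the $t_i$ fixed, the three verifications to carry out are, first, the pairwise orders and commutations; second, that $\langle t_1,\ldots,t_m\rangle=\B_n$, which I would obtain by exhibiting each standard Coxeter generator of $\B_n$ as a word in the $t_i$; and third, the intersection condition $G_J\cap G_K=G_{J\cap K}$. For the last, I would invoke the standard reduction for string diagrams (see \cite{ARP}) which collapses the full family of intersections to the family $G_{\{1,\ldots,k\}}\cap G_{\{k,\ldots,m\}}=G_{\{k\}}$ for $2\le k\le m-1$, and then identify each relevant parabolic $G_J$ as the stabiliser of an appropriate partition of the coordinates, from which the desired intersections follow.

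For the unravelled assertion when $n$ is even, I would enumerate the non-trivial proper normal subgroups $N$ of $\B_n$. For $n\ge 5$ these lie in a short list comprising the centre $Z=\langle -I_n\rangle$, the sign-flip base group $W=(\mathbb{Z}/2)^n$ together with its even-sign subgroup $W_+$, the commutator subgroup $\B_n'$, and their products with $\Alt{n}$. For each such $N$ I would check that either two of the images $\overline{t_i}$ coincide in $\B_n/N$ or the intersection condition already fails modulo $N$. Parity enters here because $-I_n\in W_+$ precisely when $n$ is even, which forces a collision of generators modulo $W_+$, and this is what drives the hypothesis.

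The hard part will be the intersection condition. The parabolics $G_{\{1\}},\,G_{\{1,2\}},\,G_{\{1,2,3\}}$ fall outside the standard Weyl-parabolic framework because the labels $12,12,6$ are not Weyl labels, so their internal structure has to be pinned down by direct computation in $\B_6$ (and the hybrid $\B_7$ containing the bridging coordinate) before the inductive argument along the $A$-type tail can take over; this low-rank base case, together with the delicate interplay between the sign-flip and permutation parts at the junction $t_3,t_4$, is where most of the genuine work of the proof lies.
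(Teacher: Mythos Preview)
Your overall architecture matches the paper's: define the $t_i$ explicitly as signed permutations, check the orders, show they generate $\B_n$ by exhibiting the standard Coxeter generators as words, verify the intersection property, and then handle the normal subgroups for the unravelled claim.  The paper even makes the same choice $t_k=(k+3,k+4)(n+k+3,n+k+4)$ for $k\ge 4$, and the base case $n=8$ is indeed dispatched by a machine check.  So the skeleton is fine.

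The genuine gap is in your treatment of the intersection condition.  The reduction you quote, namely that it suffices to check
\[
G_{\{1,\ldots,k\}}\cap G_{\{k,\ldots,m\}}=\langle t_k\rangle\qquad(2\le k\le m-1),
\]
is \emph{not} the criterion in \cite{ARP}.  What 2E16(a) actually gives (this is the paper's Lemma~\ref{lemma2.3}) is: if $G_{\{1,\ldots,m-1\}}$ and $G_{\{2,\ldots,m\}}$ are \emph{already known} to be string C-groups and
\[
G_{\{1,\ldots,m-1\}}\cap G_{\{2,\ldots,m\}}=G_{\{2,\ldots,m-1\}},
\]
then $G$ is one too.  Applying this inductively therefore requires, at each stage, an independent proof that the two maximal parabolics are C-strings.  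Your conditions do not give this: from $G_{\le 3}\cap G_{\ge 3}=\langle t_3\rangle$ and $G_{\le 2}\cap G_{\ge 2}=\langle t_2\rangle$ alone you cannot deduce, for example, that $G_{\{1,2,3\}}\cap G_{\{2,3,4\}}=G_{\{2,3\}}$.  The paper works around this by explicitly determining the isomorphism types of the intermediate parabolics,
\[
G_{\{3,\ldots,k\}}\cong\mathbb{Z}_2\times\Sym{k-1},\qquad G_{\{2,\ldots,k\}}\cong\Dih{8}\times\Sym{k},
\]
and then using a maximality argument (the smaller parabolic is maximal in the larger) together with an orbit obstruction to force the required equalities.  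Note in particular that these groups are \emph{not} stabilisers of coordinate partitions in $\B_n$; your plan to ``identify each relevant parabolic $G_J$ as the stabiliser of an appropriate partition'' would fail here, since the partition stabiliser containing $G_{\{2,\ldots,k\}}$ is a full $\B_4\times\B_k\times\B_{n-k-4}$, far larger than $\Dih{8}\times\Sym{k}$.

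For the unravelled part your outline is essentially right, though over-elaborate: once the rank $m\ge 4$, any $N$ with $[G:N]\le 4$ is automatically $N$-unravelled, so only $N=\langle -I_n\rangle$, the base group $W=(\mathbb{Z}/2)^n$, and its even half $W_+$ need checking.  The paper does not use a ``collision of generators'' for these; rather it exhibits a single pair $g\in G_{\{3,4\}}$, $h\in G_{\{1,2,3\}}$ with $gh=-I_n$, so that $\overline g=\overline h^{-1}$ witnesses $\overline G_{\{1,2,3\}}\cap\overline G_{\{3,4\}}>\overline G_{\{3\}}$ in all three quotients simultaneously.  Parity enters only because $-I_n\in W_+$ exactly when $n$ is even, which is needed for this single witness to cover the $W_+$ case.
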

The proofs of Theorems \ref{thm1.2} and \ref{thm1.3} occupy Sections 3 and 4 respectively, while in Section 2 we cover preliminary results for these proofs.

\section{Preliminary Results}
This short section contains the results we need in the two following sections. First of these results is one which identifies the Coxeter groups of type $\B_n$. For a set $\Omega = \{1,\ldots, n\},$ $\Sym{n} = \Sym{\Omega}$ denotes the symmetric group of degree $n$ defined on $\Omega$.

\begin{lemma}\label{lemma2.1}
Suppose that $\Omega = \{1,2\ldots,n,n+1, \ldots,2n\}.$ Let $\beta_0 = (1,n+1)$ and $\beta_i = (i,i+1)(n+i,n+i+1)$ for $1 \le i < n.$ Then $\langle \beta_0,\beta_1,\ldots,\beta_{n-1}\rangle$ is isomorphic to $\B_n$.
\end{lemma}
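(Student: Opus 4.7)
The plan is to show that the $\beta_i$ satisfy the Coxeter relations defining $\B_n$, which produces a surjection $\pi:\B_n \twoheadrightarrow H$ where $H := \langle \beta_0, \ldots, \beta_{n-1}\rangle$, and then to show $\pi$ is an isomorphism by comparing orders. Recall the standard Coxeter presentation of $\B_n$: generators $s_0, s_1, \ldots, s_{n-1}$, each of order $2$, with $(s_0s_1)^4 = 1$, $(s_is_{i+1})^3 = 1$ for $1 \le i \le n-2$, and $(s_is_j)^2 = 1$ whenever $|i-j|\ge 2$. Each $\beta_i$ is manifestly an involution. A direct calculation shows that $\beta_0\beta_1$ is a $4$-cycle on $\{1,2,n+1,n+2\}$, so $(\beta_0\beta_1)^4 = 1$. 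For $1 \le i \le n-2$, $\beta_i\beta_{i+1}$ is a product of two disjoint $3$-cycles on $\{i,i+1,i+2\}$ and $\{n+i,n+i+1,n+i+2\}$, so it has order $3$. Finally, when $i, j \ge 1$ and $|i-j|\ge 2$ the supports of $\beta_i$ and $\beta_j$ are disjoint, and for $j \ge 2$ the support of $\beta_j$ avoids $\{1,n+1\}$, so $\beta_0$ commutes with $\beta_j$.

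For the order comparison, I would observe that each $\beta_i$ preserves the pairing $\{\{j, n+j\} : 1 \le j \le n\}$ of $\Omega$, so $H$ lies in the centralizer $C$ of the involution $\tau := (1,n+1)(2,n+2)\cdots(n,2n)$ in $\Sym{\Omega}$; this centralizer is the hyperoctahedral group, of order $2^n n!$. To see $H = C$, note that $\beta_1,\ldots,\beta_{n-1}$ permute the $n$ pairs as the standard Coxeter generators of $\Sym{n}$, so their images surject onto the quotient $C/K \cong \Sym{n}$, where $K := \langle (1,n+1),(2,n+2),\ldots,(n,2n)\rangle$. Conjugating $\beta_0 = (1,n+1)$ by appropriate elements of $\langle \beta_1,\ldots,\beta_{n-1}\rangle$ produces every transposition $(j,n+j)$, so $H$ contains all of $K$, which has order $2^n$. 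Hence $|H| \ge 2^n n! = |C|$, forcing $H = C$ and $|H| = |\B_n|$; the surjection $\pi$ must then be an isomorphism.

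The argument is essentially bookkeeping; the only real calculation is the verification of the Coxeter relations, which is elementary and poses no genuine obstacle. The one point requiring care is fixing the correct normalisation of the Coxeter presentation of $\B_n$, namely that $s_0$ is the end-node joined by the edge labelled $4$, which we take from any standard reference on finite Coxeter groups.
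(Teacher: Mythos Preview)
Your proof is correct. The paper itself does not give an argument here at all: its proof of this lemma is simply a citation to (2.10) of Humphreys' \emph{Reflection Groups and Coxeter Groups}. What you have written is precisely the kind of elementary verification one finds (or reconstructs) from that reference: check that the $\beta_i$ satisfy the defining Coxeter relations of type $\B_n$ to obtain a surjection $\B_n\twoheadrightarrow H$, and then count to see $|H|=2^n n!$ by identifying $H$ with the full centraliser of the fixed-point-free involution $\tau=\prod_{j=1}^n(j,n+j)$ in $\Sym{\Omega}$. Your order argument is clean --- preserving the pairing $\{\{j,n+j\}\}$ is exactly the condition of centralising $\tau$, the quotient action on the $n$ pairs recovers $\Sym n$, and conjugating $\beta_0$ around produces all of $K=\langle(j,n+j):1\le j\le n\rangle$ --- so there is nothing to add beyond what you have. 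In short: same destination as the paper, but you have unpacked the black-box citation into a self-contained proof.
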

\begin{proof}
See (2.10) of \cite{hump}.
\end{proof}
In a similar vein to Lemma \ref{lemma2.1}, we have the well-known characterization of $\Sym{n}$.

\begin{lemma}\label{lemma2.2}
Suppose that $H$ is a group with presentation $\langle r_1,\ldots,r_{n-1} | (r_ir_j)^{m_{ij}}) \rangle$. If $m_{ii} = 1$ for $i=1,\ldots,n-1,m_{ij} =3$ if $|i-j|=1$ and $m_{ij}=2$ if $|i-j|>1,$ then $H \cong \Sym{n}$.
\end{lemma}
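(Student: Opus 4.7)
The plan is to exhibit a surjective homomorphism $\varphi \colon H \to \Sym{n}$ and then bound $|H| \le n!$, which together force $\varphi$ to be an isomorphism.

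For the surjection, I would define $\varphi$ on generators by $r_i \mapsto (i,i+1)$ and verify that the images satisfy all the defining relations of $H$: each $(i,i+1)$ is an involution, two adjacent transpositions $(i,i+1)$ and $(i+1,i+2)$ have product a $3$-cycle, and two non-adjacent ones are disjoint and hence commute. By the universal property of group presentations, $\varphi$ extends to a homomorphism, and since the adjacent transpositions generate $\Sym{n}$ it is surjective.

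For the upper bound I would argue by induction on $n$. The base case $n = 2$ gives $H = \langle r_1 \mid r_1^2 = 1 \rangle$ of order $2 = 2!$. For the inductive step set $K = \langle r_1,\ldots,r_{n-2}\rangle \le H$; the relations among $r_1,\ldots,r_{n-2}$ inside $H$ include all the relations of the analogous smaller presentation, so the natural map from that smaller Coxeter group onto $K$ is surjective and by induction $|K| \le (n-1)!$. I would then consider the $n$-element set
$$T = \{\,1,\ r_{n-1},\ r_{n-1}r_{n-2},\ r_{n-1}r_{n-2}r_{n-3},\ \ldots,\ r_{n-1}r_{n-2}\cdots r_1\,\}$$
and show that $H = \bigcup_{t \in T} K t$, which yields $|H| \le n \cdot (n-1)! = n!$. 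It suffices to verify that the set $KT$ is closed under right multiplication by every generator $r_i$; one handles a product $(r_{n-1}r_{n-2}\cdots r_j)\cdot r_i$ by pushing $r_i$ leftward using the commutation relations when $r_i$ is far from the tail of the string and the braid relation $r_i r_{i+1} r_i = r_{i+1} r_i r_{i+1}$ when $r_i$ sits adjacent to the innermost letter, absorbing anything that ends up to the left of the string into $K$.

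The main obstacle is this coset enumeration: one must verify case by case that every product $t\cdot r_i$ with $t\in T$ lies back in $KT$, splitting according to whether $i$ is less than, equal to, or greater than the innermost index appearing in $t$. Each case reduces to a short word manipulation using only the given Coxeter relations. Once this is carried out, combining surjectivity of $\varphi$ with the bound $|H|\le n! = |\Sym{n}|$ completes the proof.
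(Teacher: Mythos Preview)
Your argument is correct and is essentially the classical direct proof that the Coxeter group of type $A_{n-1}$ is $\Sym{n}$: the surjection $r_i\mapsto(i,i+1)$ together with the coset bound $|H|\le n\cdot|K|\le n!$ forces an isomorphism, and the case analysis you outline for showing $KT$ is stable under right multiplication by each $r_i$ goes through exactly as you describe (commuting relations handle $i<j-1$, cancellation handles $i=j$, appending handles $i=j-1$, and the braid relation plus commutation gives $t\,r_i=r_{i-1}\,t$ when $j<i\le n-1$).

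The paper, however, does not prove this lemma at all: it simply cites (6.4) of Humphreys' \emph{Reflection groups and Coxeter groups}. So your approach differs only in that you actually supply an argument rather than invoking a reference. What you gain is self-containment; what the paper gains is brevity, since this is a well-known standard result and the authors treat it as background. The proof in Humphreys follows the same strategy you sketch, so in substance your route matches the one the paper implicitly points to.
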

\begin{proof}
See (6.4) of \cite{hump}.
\end{proof}
We make frequent use of the next lemma to verify that particular sets of involutions are C-strings.

\begin{lemma}\label{lemma2.3}
Let $G = \langle t_1,\ldots t_{n}\rangle$ where for all $1\le i,j \le n $ we have $|i-j|\ge 2$ implies that $(t_it_j)^2 = 1$ and each $t_i$ an involution. If $\{t_1,\ldots, t_{n-1}\}$ and $\{t_2,\ldots,t_n\}$ are C-strings for $\langle t_1,\ldots, t_{n-1}\rangle$ and $\langle t_2,\ldots, t_{n} \rangle$, then $\{t_1,\ldots, t_{n}\}$ is a C-string for G if 
$$\langle t_1,\ldots, t_{n-1}\rangle \cap \langle t_2,\ldots, t_{n} \rangle = \langle t_2,\ldots, t_{n-1} \rangle. $$ 
\end{lemma}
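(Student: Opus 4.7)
The plan is to verify the three C-string axioms for $\{t_1, \ldots, t_n\}$ in $G$. Axioms (i) and (ii) are immediate from the hypotheses, so the content lies in axiom (iii): $G_J \cap G_K = G_{J \cap K}$ for all $J, K \subseteq I = \{1, \ldots, n\}$. The reverse containment is trivial; for the forward one, set $H_1 = \langle t_1, \ldots, t_{n-1}\rangle$ and $H_2 = \langle t_2, \ldots, t_n \rangle$, so that the hypothesis reads $H_1 \cap H_2 = G_{\{2,\ldots,n-1\}}$. Two observations will be used throughout: first, the C-string property of $H_2$ applied to $\{n\}$ and $\{2,\ldots,n-1\}$ yields $\langle t_n\rangle \cap G_{\{2,\ldots,n-1\}}=1$, hence $t_n \notin H_1$ (and symmetrically $t_1 \notin H_2$); second, for any $i \in \{2, \ldots, n-1\}$ with $i \notin J$, the commutation relations together with the C-string properties of $H_1$ and $H_2$ force the internal direct product decomposition $G_J = G_{J \cap \{1,\ldots,i-1\}} \times G_{J \cap \{i+1,\ldots,n\}}$.

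Given these, I would verify $G_J \cap G_K \subseteq G_{J \cap K}$ by case analysis on how $J$ and $K$ meet the endpoints $1, n$. If $n \notin J \cup K$, both $G_J, G_K \subseteq H_1$ and the inclusion is immediate from the C-string property of $H_1$; the case $1 \notin J \cup K$ is symmetric. For the crossing configuration $n \in J$, $1 \in K$, $1 \notin J$, $n \notin K$, any $g \in G_J \cap G_K$ lies in $H_1 \cap H_2 = G_{\{2,\ldots,n-1\}}$, and applying the C-string property of $H_2$ to the pair $(J, \{2, \ldots, n-1\})$ and then that of $H_1$ to the refined pair places $g$ in $G_{J \cap K}$. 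The remaining overlap cases, those with $n$ (or symmetrically $1$) lying in $J \cap K$, are dispatched via a sub-lemma asserting that $G_J \cap H_1 = G_{J \cap \{1, \ldots, n-1\}}$ whenever $n \in J$. I would prove this sub-lemma by induction on $|J|$: when $1 \notin J$ one has $G_J \subseteq H_2$, so $G_J \cap H_1 \subseteq G_{\{2,\ldots,n-1\}}$ and the C-string property of $H_2$ refines the intersection; and when $J$ is not an interval, the gap decomposition above reduces to a strictly smaller instance.

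The main obstacle is the residual configuration in which $\{1, n\} \subseteq J \cap K$ and $J \cup K = I$, so that $J$ and $K$ share no gap in $\{2, \ldots, n-1\}$. Here neither restriction to a single $H_i$ nor a joint direct product decomposition is immediately available: a gap $i \notin J$ must itself lie in $K$, and vice versa. I anticipate resolving this by induction on $|J \setminus K| + |K \setminus J|$, exploiting a gap in $J$ to decompose $G_J$ into commuting factors lying respectively in $H_1$ and in $H_2$, and then using the sub-lemma and its symmetric $H_2$-counterpart to show that each factor lies in an appropriate subgroup of $G_K$, thereby reducing to a previously handled configuration.
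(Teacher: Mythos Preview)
The paper gives no argument of its own here; it cites Proposition 2E16(a) of McMullen--Schulte. In that reference the work is done by first reducing the full intersection property, for any string group generated by involutions, to the special case of ``interval'' subgroups: it suffices to verify $G_{\{1,\ldots,k\}}\cap G_{\{j,\ldots,n\}}=G_{\{j,\ldots,k\}}$ for all $j\le k+1$. Granting that reduction, the hypothesis $H_1\cap H_2=G_{\{2,\ldots,n-1\}}$ together with the two assumed C-string properties finishes the proof in a few lines, and your residual configuration never appears.

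Your direct verification of $G_J\cap G_K=G_{J\cap K}$ for arbitrary $J,K$ is a legitimate alternative, but the sketch has two soft spots. First, the case split omits the configuration $\{1,n\}\subseteq J$ with $\{1,n\}\cap K=\emptyset$; this is neither a ``crossing'' nor an ``overlap'' case as you have described them, though it is harmless (then $G_K\le H_1$ and your sub-lemma applies). Second, and more substantively, the residual case is only conjectured, and the bare induction on $|J\setminus K|+|K\setminus J|$ does not move by itself: decomposing $G_J$ at an arbitrary gap $i\notin J$ writes $g=ab$ with $a\in H_1$ and $b\in H_2$, but neither factor is visibly in $G_K$, so there is no smaller instance to hand off to. The missing observation is to choose $i$ as the \emph{least} gap of $J$ and (swapping $J$ and $K$ if necessary) to arrange that $i$ does not exceed the least gap of $K$; then $\{1,\ldots,i-1\}\subseteq J\cap K$, so $a\in G_{\{1,\ldots,i-1\}}\le G_K$ automatically, whence $b=a^{-1}g\in G_K$ as well. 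Since $1\notin J\cap\{i+1,\ldots,n\}$, the $H_2$-version of your sub-lemma applied to $G_K$ then gives $b\in G_{(J\cap K)\cap\{i+1,\ldots,n\}}$, and $g=ab\in G_{J\cap K}$. With this refinement your argument is complete and self-contained, at the cost of being longer than the textbook route.
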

\begin{proof}
See 2E16(a) in \cite{ARP}. 
\end{proof}

Our group theoretic notation is standard, as given for example in \cite{suzuki}, with the addition that $\Dih{n}$ denotes the dihedral group of order $n$.

\section{Rank 4 Unravelled C-strings}
Here we establish Theorem \ref{thm1.2}. So we are assuming that $n$ is odd and $n \ge 5$.We shall construct the C-string for $\B_n$ working in $\Sym{2n}.$ First we define the involutions $t_i$, $i =1,2,3,4$ in $\Sym{2n} = \Sym{\Omega}$, where $\Omega = \{1,\ldots,2n\}.$
\begin{personalEnvironment}$\boldsymbol{\mkern-7mu )}$\label{(3.1)}
\vspace{-1cm}
\begin{align*}
    t_1 &= \prod_{i=0}^{\lfloor \frac{n-2}{2}\rfloor} ( 1 +2i, 2+2i)( n+1 +2i, n+2+2i)\\
    t_2 &= \prod_{i=1}^{\lfloor \frac{n-3}{2}\rfloor} ( 2 +2i, 3+2i)( n+2 +2i, n+3+2i)\\
    t_3 &= (1,3)(2,4)(n+1,n+3)(n+2,n+4)\\
    t_4 &= (1,2)(n+1,n+2)\prod_{i=1}^{n-2}(2+i, n+2+i)
\end{align*}
\end{personalEnvironment}

Observe that $t_1t_2$ when written as a product of pairwise disjoint cycles has two of length 2 and two of length $n-2$. Hence, as $n-2$ is odd, $t_1t_2$ has order $2n-4$. It is easy to check that $t_2t_3$ has order 6 and $t_3t_4$ has order 4. Also we see that $t_1t_3 = t_3t_1$ and $t_2t_4 = t_4t_2$.

Put $G = \langle t_1,t_2,t_3,t_4\rangle$. We will show in (\ref{(3.10)}) %TODO make sure this is consistent.
that $G \cong \B_n$,  after we have first investigated the subgroups $G_{123} = \langle t_1,t_2,t_3\rangle$ and $G_{234} = \langle t_2,t_3,t_4\rangle$.

Beginning with $G_{234}$ and setting 
\begin{align*}
    &\Delta_1 = \{1,2,3,4,5,n+1,n+2,n+3,n+4,n+5\},\\
    &\Delta_6 = \{6,7,n+6,n+7\},\\
    &\Delta_8 = \{8,9,n+8,n+9\},\\
    &\,\,\vdots \\
    &\Delta_{n-1} = \{n-1,n,2n-1,2n\},
\end{align*}

we note that
\vspace{0.5cm}
\begin{personalEnvironment}$\boldsymbol{\mkern-7mu )}$\label{(3.2)}\, the $G_{234}$ orbits of the $\Omega$ are $\Delta_1, \Delta_6, \Delta_8,  \ldots , \Delta_{n-1}$.
\end{personalEnvironment}

We also note 
\vspace{0.5cm}
\begin{personalEnvironment}$\boldsymbol{\mkern-7mu )}$\label{(3.3)} the induced action of $\langle t_2,t_4\rangle$ on each of $\Delta_6,\Delta_8, \ldots , \Delta_{n-1}$ is identical to its action on $\Delta_4 = \{4,5,n+4,n+5\}$.

Set $s_2 = (4,5)(9,10), s_3 = (1,3)(2,4)(6,8)(7,9)$ and $s_4 = (1,2)(3,8)(4,9)(5,10)(6,7)$ (these are $t_2,t_3,t_4$ for the case $n=5$), and $H= \langle s_2,s_3,s_4 \rangle$.
\end{personalEnvironment}
\vspace{0.5cm}
\begin{personalEnvironment}$\boldsymbol{\mkern-7mu )}$\label{(3.4)}
$G_{234} \cong H \cong \mathbb{Z}_2 \times \Sym{5}$ with $\{t_2,t_3,t_4\}$ a C-string for $G_{234}$. Further $\langle (t_2t_3t_4)^5 \rangle = Z(G_{234})$.

Restricting $G_{234}$ to $\Delta_1$ yields a homomorphism from $G_{234}$ to $H$, and then (\ref{(3.3)}) implies $G_{234} \cong H$. Employing \textsc{Magma}\cite{Magma} quickly reveals the structure of $H$ and that $\{s_2,s_3,s_4\}$ is a $C$-string for $H$. This proves (\ref{(3.4)}).
\end{personalEnvironment}

We now turn our attention to $G_{123}$.
\vspace{0.5cm}
\begin{personalEnvironment}$\boldsymbol{\mkern-7mu )}$\label{(3.5)}
\renewcommand{\theenumi}{(\roman{enumi})}
\begin{enumerate}
    \item The $G_{123}$- orbits of $\Omega$ are $\Lambda_1 = \{1,..,n\}$ and $\Lambda_{n+1} = \{n+1,\ldots, 2n\}.$
    \item For $j \in \Lambda_1$ and $g \in G_{123}$, $\, (j)g = k$ if and only if $(j+n)g = k+n$.
\end{enumerate}
\end{personalEnvironment}
\vspace{0.5cm}
\begin{personalEnvironment}$\boldsymbol{\mkern-7mu )}$\label{(3.6)}
For $ 1 \le i < n$, we have $(i,i+1)(n+i,n+i+1) \in G_{123}.$

In view of (\ref{(3.5)})($ii$) it will be sufficient to look at the action of elements of $G_{123}$ on $\Lambda_1$. So, for $i=1,2,3,$ let $\hat{t}_i$ denote the induced action of $t_i$ on $\Lambda_1$. Hence 
\begin{align*}
    &\hat{t}_1 = \prod_{i=0}^{\lfloor \frac{n-2}{2} \rfloor} (1+2i,2+2i),\\
    &\hat{t}_2 = \prod_{i=1}^{\lfloor \frac{n-3}{2} \rfloor} (2+2i,3+2i) \, \text{ and }\\
    &\hat{t}_3 = (1,3)(2,4).
\end{align*}
Consequently
$$\hat{t}_1\hat{t}_2 = (1,2)(3,5,7,\ldots,n,n-1,n-3,\ldots,6,4).$$
Since $n-2$ is odd, 
$(\hat{t}_1\hat{t}_2)^{n-2} = (1,2)$. Therefore, $(1,2)(n+1,n+2) \in G_{123}$. Also 
\begin{align*}
    \hat{t}_3\hat{t}_1\hat{t}_2 &= (1,3)(2,4)(1,2)(3,5,7,\ldots,n,n-1,n-3,\ldots,6,4)\\
    &= (1,5,7,\ldots,n,n-1,n-3,\ldots,6,4)(2,3)
\end{align*}
which is in disjoint cycle form. Again, since $n-2$ is odd, we have $(\hat{t}_3\hat{t}_1\hat{t}_2)^{n-2} = (2,3)$. Hence $(2,3)(n+2,n+3) \in G_{123}.$ Now $(3,4)  = (1,2)^{\hat{t}_3}$, so $(3,4)(n+3,n+4) \in G_{123}.$

Now we recursively construct the remaining $(i,i+1)(n+i,n+i+1)$ for all $i$, with $3 < i < n$. Supposing we have $(i,i+1)(n+i,n+i+1) \in G_{123}$ for all $3 \le i \le k$. We show that $(k+1,k+2)(n+k+1,n+k+2) \in G_{123}$. If $k$ is even, then 

\begin{align*}
    (k,k+1)(n+k,n+k+1)^{t_1} &= 
    (k-1,k+2)(n+k-1,n+k+2).
\end{align*}
Since $$
    (k-1,k+2)(n+k-1,n+k+2)^{(k-1,k+1)(n+k-1,n+k+1)} =
     (k+1,k+2)(n+k+1,n+k+2)
$$ and $$(k-1,k+1)(n+k-1,n+k+1) = (k-1,k)(n+k-1,n+k)^{(k,k+1)(n+k,n+k+1)} \in G_{123}, $$
we deduce that $(k+1,k+2)(n+k+1,n+k+2) \in G_{123}$. When $k$ is odd, a similar calculation using $t_2$ in place of $t_1$, also yields the same conclusion, so proving (\ref{(3.6)}).
\end{personalEnvironment}
\vspace{0.5cm}
\begin{personalEnvironment}$\boldsymbol{\mkern-7mu )}$\label{(3.7)}
$G_{123} \cong \Sym{n}$. 

From (\ref{(3.6)})
$$K = \langle \,\, (i,i+1)(n+i,n+i+1)  \,\,| \,\,1 \le i < n  \,\,\rangle \le G_{123}, $$
with the generators of $K$ satisfying the Coxeter relations for $\Sym{n}$. Thus, by Lemma \ref{lemma2.2}, $K$ is isomorphic to a quotient of $\Sym{n}$ and hence $K \cong \Sym{n}$. The action of $G_{123}$ on $\{\{i,n+i\} | i \in \Lambda_1\}$ forces $K=G_{123}$, so giving (\ref{(3.7)}).
\end{personalEnvironment}
\vspace{0.5cm}
\begin{personalEnvironment}$\boldsymbol{\mkern-7mu )}$\label{(3.8)}
$\{t_1,t_2,t_3\}$ is a C-string for $G_{123}$.

We only need check $\langle t_1,t_2 \rangle \cap \langle t_2,t_3 \rangle = \langle t_2 \rangle$, the other intersections being clear. Now $\langle t_2t_3 \rangle$ has $\{1,3\}$ and $\{2,4,5\}$ as an orbits on $\Omega$ and so  $\langle (t_2t_3)^2 \rangle$ has $\{2,4,5\}$ as an orbit and $\langle(t_2t_3)^3\rangle$ has $\{1,3\}$. Since $\langle t_1t_2 \rangle$ has $\{1,2\}$ as an orbit and $t_2t_3$  has order 6, we conclude that $\langle t_1,t_2\rangle \cap \langle t_2,t_3\rangle = \langle t_2\rangle.$ So (\ref{(3.8)}) holds. 
\end{personalEnvironment}

Set $\omega_0 = \prod_{i=1}^n(i,n+i)$.
\vspace{0.5cm}
\begin{personalEnvironment}$\boldsymbol{\mkern-7mu )}$\label{(3.9)}
$\omega_0 = (t_1t_2t_3t_4)^n.$

We calculate that 

\hspace{-2cm}\begin{align*}
    t_1t_2 &= (1,2)(3,5,7,\ldots,n,n-1,\ldots,6,4)(n+1,n+2)(n+3, \ldots , 2n, 2n-1,\ldots n+4),\\
    t_1t_2t_3 &= t_1t_2(1,3)(2,4)(n+1,n+3)(n+2,n+4)
    \\
    &= (1,4)(3,5,\ldots,n,n-1,\ldots,8,6,2)(n+1,n+4)(n+3, \ldots , 2n,2n-1, \ldots, n+4), \\
        t_1t_2t_3t_4 &= t_1t_2t_3 (1,2)(n+1,n+2)\prod_{i=1}^{n-2}(2+i,(n+2)+i)
    \\
     &= (\underbrace{1,n+4,n+2,3,n+5,7,\ldots,n,2n-1,n-3,2n-5,\ldots,n+6,n+1}_{n+1,}\\
     &\underbrace{4,2,n+3,\ldots,6}_{n-1})\\ 
     &\text{when  } n \equiv 1 \mod 3 \text{ and }\\
    &=(\underbrace{1,n+4,n+2,3,n+5,7,\ldots,2n,n-1,2n-3,n-5,\ldots,n+6,n+1}_{n+1},\\
    &\underbrace{4,2,n+3,\ldots,6}_{n-1})\\  &\text{when  } n \equiv 3 \mod 3. 
    \end{align*}
    
    Therefore $t_1t_2t_3t_4 = \prod_{i=1}^{n}(i,n+i)$.
\end{personalEnvironment}    
\vspace{0.5cm}
\begin{personalEnvironment}$\boldsymbol{\mkern-7mu )}$\label{(3.10)}
$G \cong \B_n$. 

Set $\beta_0 = (1,n+1)$ and, for $1 \le i < n$,  $\beta_i = (i,i+1)(n+i,n+i+1)$. Put $L = \langle \beta_0,\beta_1,\ldots,\beta_{n-1}\rangle$. 
By Lemma \ref{lemma2.2} $L \cong \B_n$.

Directly from their definitions, we have 
\vspace{-0.1cm}
\begin{align*}
    t_1 &= \prod_{i=0}^{\lfloor \frac{n-2}{2} \rfloor} \beta_{1+2i} \text{ and }\\
    t_2 &= \prod_{i=0}^{\lfloor \frac{n-3}{2} \rfloor} \beta_{2+2i}. 
\end{align*}
An easy check shows $t_3 = {\beta_1}^{\beta_2}{\beta_2}^{\beta_3}$. Since 
\begin{align*}
    t_4 &= (1,2)(n+1,n+2)\prod_{i=1}^{n-2}(2+i,n+2+i)\\
    &= \beta_1\prod_{i=2}^n \beta_0^{\prod_{j=i}^i \beta_j},
\end{align*}
we infer that $G \le L$. 

By (\ref{(3.6)}) $\beta_i \in G$ for $1 \le i < n.$ Thus to complete the proof of (\ref{(3.10)}) we need to demonstrate that $\beta_0 \in G$. Now
\begin{align*}
    {t_2}^{t_3} &= (\prod_{i=1}^{\lfloor \frac{n-3}{2}\rfloor}(2+2i,3+2i)(n+2+2i,n+3+2i))^{(1,3)(2,4)(n+1,n+3)(n+2,n+4)}\\
    &= (2,5)(n+2,n+5)t_2(4,5)(n+4,n+5)  \\
     \intertext{and} 
     {t_2}^{t_3t_4} &= (1,n+5)(n+1,5)t_2(4,5)(n+4,n+5).\\
     \intertext{Hence} 
     {t_2}^{t_3}{t_2}^{t_3t_4} &= (2,5)(n+2,n+5)t_2(4,5)(n+4,n+5)(1,n+5)(n+1,5)t_2(4,5)(n+4,n+5)\\
    &= (2,5)(n+2,n+5)(1,n+5)(n+1,5)\\
    &= (2,n+1,5)(1,n+5,n+2).\\
\intertext{Therefore} 
     t_1{t_2}^{t_3}{t_2}^{t_3t_4} &= t_1t_2(2,n+1,5)(1,n+5,n+2)\\
    &=(1,n+1)(2,n+5,n+7,\ldots,2n,2n-1,\ldots,n+4,n+3, \\
    & \quad\quad n+2,5,7,9,\ldots,n,n-1,\ldots,6,4,3),\\
    \intertext{and so}  (t_1{t_2}^{t_3}{t_2}^{t_3t_4})^{n-1} &= \prod_{i=2}^n (i,n+i),
\end{align*}
Hence, using (\ref{(3.9)}), 
$$ \beta_0 = \prod^n_{i=1}(i,n+i) (t_1{t_2}^{t_3}{t_2}^{t_3t_4})^{-n+1} \in G,$$ which proves $(\ref{(3.10)})$.
\end{personalEnvironment}
\vspace{0.5cm}
\begin{personalEnvironment}$\boldsymbol{\mkern-7mu )}$\label{(3.11)}
$\{t_1,t_2,t_3,t_4\}$ is a string C-string for $G$. 

From (\ref{(3.4)}) $G_{234}\cong \mathbb{Z}_2 \times \Sym{5}$ and $\langle (t_2t_3t_4)^5 \rangle = Z(G_{234})$. If $G_{123} \cap G_{234} > G_{23}$, then, as $G_{23} \cong \Dih{12}$, we must have either $(t_2t_3t_4)^5 \in G_{123}\cap G_{234}$ or $G_{123} \cap G_{234}$ has index at most $2$ in $G_{234}$ (and so $t_3t_4 \in G_{123}$). Either of these possibilities would contradict (\ref{(3.5)})($i$) as $(t_2t_3t_4)^5 : 1 \rightarrow n+1$ and $t_3t_4 : 5 \rightarrow n + 5$. Thus $G_{123} \cap G_{234} = G_{23}$. Using (\ref{(3.4)}) and (\ref{(3.8)}) we now obtain (\ref{(3.11)}).
\end{personalEnvironment}

\begin{personalEnvironment}$\boldsymbol{\mkern-7mu )}$\label{(3.12)}
When $n \ge 7$, $\{t_1,t_2,t_3,t_4\}$ is an unravelled C-string for $G$.

Let $M_1 = \langle \,\,(i,n+i) \,\,|\,\, 1 \le i \le n \,\,\rangle (=\O_2(G))$ and $M_2=\langle \,\, (1,n+1)(i,n+i) \,\,|\,\, 1 < i \le n \,\,\rangle$. From (\ref{(3.10)}) $G \cong \B_n$ and hence for $N \trianglelefteq G$, $1 \ne N \ne G$, we either have $[G:N] \le 4$ or $N = \langle \omega_0 \rangle,$ $M_1$ or $M_2$. Set $\overline{G} = G/N$. Since $\{t_1,t_2,t_3,t_4\}$ has rank 4, we are only required to check that $\{\overline{t}_1,\overline{t}_2,\overline{t}_3,\overline{t}_4\}$ is not a C-string for $N = \langle \omega_0 \rangle, M_1$ and $M_2$. 

Suppose $N=M_1$ or $M_2$. Then 
$$g = (t_3t_4)^2=(1,n+2)(2,n+1)(3,n+4)(4,n+3) \in G_{234}.$$

Also, using (\ref{(3.6)}), 
$$h = (1,2)(3,4)(n+1,n+2)(n+3,n+4) \in G_{123}. $$

Since $$ gh^{-1} = (1,n+1)(2,n+2)(3,n+3)(4,n+4) \in M_2 \le M_1, $$ 
we get $\overline{g} = \overline{h} \in \overline{G}_{123} \cap \overline{G}_{234}.$ The non-trivial elements of $M_1$ either fix an element of $\Lambda_1$ or maps it to an element of $\Lambda_2$. Hence, as $\{1,3\}$ is a $G_{23}$-orbit, $\overline{h} \notin \overline{G}_{23}$. Thus $\overline{G}_{123} \cap \overline{G}_{234} > \overline{G}_{23}$ when $N=M_1$ or $M_2$. Now suppose $N = \langle \omega_0 \rangle$. This time we take 
$$ g = \prod^5_{i=1}(i,n+i)\prod_{j=1}^{\lfloor \frac{n-5}{2}\rfloor}(4+2j,n+5+2j)(5+2j,n+4+2j) \text{ and }$$
$$ h = \prod_{j=1}^{\lfloor \frac{n-5}{2}\rfloor}(4+2j,5+2j)(n+4+2j,n+5+2j).$$

Then $g \in G_{234}$, $h \in G_{123}$ and $gh^{-1} = \omega_0.$ Therefore $\overline{g} = \overline{h} \in \overline{G}_{123} \cap \overline{G}_{234}.$ It is straightforward to also see that $\overline{g} \notin \overline{G}_{23},$ and consequently (\ref{(3.12)}) is proven.
\end{personalEnvironment}

Combining (\ref{(3.10)}),(\ref{(3.11)}) and (\ref{(3.12)}) completes the proof of Theorem \ref{thm1.2}. 
%%%%%%%%%%%%%%%%%%%%%%%%%%%%%%%%%%%%%%%

\section{Rank $n-4$ Unravelled C-Strings}

This final section is devoted to the proof of Theorem \ref{thm1.3}. Thus we assume $n\ge8$ and we set $m = n-4$.

Just as in the proof of Theorem \ref{thm1.2} we construct $\{t_1,t_2,\ldots,t_m\}$ as a subset of $\Sym{2n}$ and then show that it is a C-string for $\B_n$. Finally, when $n$ is even, we prove that it is an unravelled C-string. So again, let $\Omega = \{1,\ldots,2n\}$ and define the $t_i$ as follows. 

\begin{personalEnvironment}$\boldsymbol{\mkern-7mu )}$\label{(4.1)}
\vspace{-0.5cm}
\begin{align*}
    t_1 &= (2,3)(n+2,n+3)(4,5)(n+4,n+5)\prod_{i=6}^n(i,n+i)\\
    t_2 &= (1,2)(n+1,n+2)(3,4)(n+3,n+4)(5,6)(n+5,n+6)\prod_{i=7}^n(i,n+i)\\
    t_3 &= (2,3)(n+2,n+3)(6,7)(n+6,n+7)\\ \intertext{and for $k = 4,\ldots,m$,}
    t_k &= (k+3,k+4)(n+k+3,n+k+4).
\end{align*}
\end{personalEnvironment}
Set $I = \{1,2,\ldots,m\}.$ 

\begin{personalEnvironment}$\boldsymbol{\mkern-7mu )}$\label{(4.2)} $t_3t_4 = (2,3)(n+2,n+3)(6,8,7)(n+6,n+8,n+7).$
\end{personalEnvironment}

Next we show that 

\begin{personalEnvironment}$\boldsymbol{\mkern-7mu )}$\label{(4.3)}
$(t_1t_2\ldots t_mt_3t_4)^n = \prod_{i=1}^n(i,n+i).$
\begin{align*}
    \intertext{We calculate that}
    t_4t_5\ldots t_m =& (7,n,n-1,n-2,\ldots,9,8)(n+7,2n,2n-1,\ldots,n+8)\\ \intertext{and}
    t_1t_2t_3 =& (1,3)(n+1,n+3)(2,4,7,6,n+5,n+2,n+4,n+7,n+6,5).\\
    \intertext{Hence}
    t_1t_2\ldots t_m =& (1,3)(n+1,n+3)(2,4,n,n-1,\ldots,9,8,7,6,n+5,\\&n+2,n+4,2n,2n-1,\ldots,n+8,n+7,n+6,5).\\
    \intertext{Using (\ref{(4.2)}) we now get }
    t_1t_2\ldots t_mt_3t_4 =& (1,2,4,n,n-1,\ldots,9,7,8,6,n+5,\\&n+3,n+1,n+2,n+4,2n,\ldots,n+9,n+7,n+8,n+6,5,3),\\
    \intertext{which yields (\ref{(4.3)}).}
\end{align*}
\end{personalEnvironment}

\begin{personalEnvironment}$\boldsymbol{\mkern-7mu )}$\label{(4.4)} $(t_1t_2\ldots t_{m-1}t_3t_4)^{n-1} = \prod_{i=1}^{n-1}(i,n+i).$

\begin{align*}
   \intertext{First we have} 
   t_1t_2 \ldots t_{m-1} =& (1,3)(n+1,n+3)(2,4,n-1,n-2,\ldots,6,\\&n+5,n+2,n+4,2n-1,\ldots,n+7,n+6,5),\\
   \intertext{then, using (\ref{(4.2)}),}
   t_1t_2\ldots t_{m-1}t_3t_4 =& (1,2,4,n-1,\ldots,9,7,8,6,n+5,n+3,\\&n+1,n+2,n+4,2n-1,\ldots,n+9,n+7,n+8,n+6,5,3).\\
   \intertext{This gives the desired expression for $(t_1t_2\ldots t_{m-1}t_3t_4)^{n-1}$.}
\end{align*}
\end{personalEnvironment}

Combining (\ref{(4.3)}) and (\ref{(4.4)}) we observe that 

\begin{personalEnvironment}$\boldsymbol{\mkern-7mu )}$\label{(4.5)} $(t_1t_2\ldots t_{m-1}t_3t_4)^{n-1}(t_1t_2\ldots t_{m}t_3t_4)^{n} = (n,2n)$.

\end{personalEnvironment}

\begin{personalEnvironment}$\boldsymbol{\mkern-7mu )}$\label{(4.6)}
For $i,j \in \{1,2,\ldots,m\}$, the order of $$t_it_j \text{ is } \begin{dcases} 1 &\text{ if } i = j \\ 2 &\text{ if } |i -j| \ge 2 \\
12  &\text{ if } i=1,j=2 \text{ or } i=2,j=3\\ 6 &\text{ if } i=3,j=4\\ 3 &\text{ otherwise.}\end{dcases} $$
 It is evident that each $t_i$ is an involution as they are defined as the products of pairwise disjoint transpositions. Since
 \begin{align*}
     t_1t_2 =& (1,2,4,6,n+5,n+3,n+1,n+2,n+4,n+6,5,3),
     \intertext{$t_1t_2$ has order 12. Similarly we have }
     t_2t_3 =& (1,3,4,2)(n+1,n+3,n+4,n+2)(5,7,n+6,n+5,n+7,6)\prod_{i=8}^n(i,n+i),\\ \intertext{
 and so $t_2t_3$ also has the order 12. From (\ref{(4.2)}) we see that order of $t_3t_4$ is 6. If, $|i-j| = 1$ and $4 \le i < j \le m$, then}
 t_it_j =& (3+i,5+i,4+i)(n+3+i,n+5+i,n+4+i) 
  \end{align*}
 has order 3. That $t_i$ and $t_j$ commute when $|i-j| \ge 2$ is readily checked, so verifying (\ref{(4.6)}). 
\end{personalEnvironment}

Put $G = \langle t_1,t_2,\ldots,t_m\rangle.$

\begin{personalEnvironment}$\boldsymbol{\mkern-7mu )}$\label{(4.7)}
$G\cong \B_n$.

We again employ Lemma \ref{lemma2.1} to identify $G$. So set  $\beta_0 = (1,n+1),$ $\beta_i = (i,i+1)(n+i,n+i+1)$, for $1 \le i < n,$ and $L = \langle \beta_0,\beta_1,\ldots,\beta_{n-1}\rangle \le \Sym{2n}$. Also set $\eta_i = (i,n+i)$ for $1 \le i \le n$. Note that $\eta_1 = \beta_0$ and $\eta_i = {\beta_0}^{\prod_{j=1}^{i-1}\beta_j}$ for $i=2,\ldots,n.$ Therefore $\eta_i \in L$ for $i =1,\ldots,n.$ Because
\begin{align*}
    t_1 =& \beta_2\beta_4\prod_{i=6}^{n}
\eta_i,\\
    t_2 =&\beta_1\beta_3\beta_5 \prod_{i=7}^n\eta_i,\\
    t_3 =& \beta_2\beta_6 \,\, \text{ and, for $4 \le i \le m$},\\
    t_i =& \beta_{i+3}
\end{align*}
we conclude that $G \le L.$

From (\ref{(4.5)}), $\eta_n = (n,2n) \in G$. Now let $g = t_mt_{m-1}\ldots t_4t_3t_2t_1t_2t_1t_2 \in G.$ Then we see that ${\eta_n}^g = \eta_1 = \beta_0,$ whence $\beta_0 \in G.$ Since $\beta_i = t_{i-3}$ for $i=7,\ldots,n-1,$ it remains to show that $\beta_1,\beta_2,\beta_3,\beta_4,\beta_5$ and $\beta_6$ are in $G$.

Employing (\ref{(4.2)}) again we have 
\begin{align*}
    \beta_2 =& (t_3t_4)^3,\\
    \beta_6 =& t_3(t_3t_4)^3,\\
    \beta_1 =& {\beta_6}^{t_2t_3t_1t_2t_1t_2t_3} \\
    \beta_3 =& {\beta_6}^{t_2t_3t_1t_2t_1t_3}\\
    \beta_4 =& {\beta_6}^{t_2t_1t_2t_1t_3t_2} \text{ and}\\
    \beta_5 =& {\beta_6}^{t_2t_3\eta_6}.
\end{align*}

Since $\eta_6 = {\eta_{n}}^h$ where $h = t_mt_{m-1}\ldots t_4 t_3$, we have now shown that $\beta_i \in G$ for $i=0,\ldots,n-1.$ Thus $G=L,$ and (\ref{(4.7)}) is proven.
\end{personalEnvironment}
We now turn our attention to showing that $\{t_1,\ldots,t_m\}$ is a C-string.

For $t_1,\ldots,t_m$ if we wish to highlight that they are permutations in $\Sym{2n}$ we shall write $t_1^{(n)},\ldots,{t_m}^{(n)}.$ Set $G_{1234}^{(n)} = \langle t_1^{(n)},t_2^{(n)},t_3^{(n)},t_4^{(n)} \rangle$, $G_{123}^{(n)} = \langle t_1^{(n)},t_2^{(n)},t_3^{(n)}\rangle$, $G_{234}^{(n)} = \langle t_2^{(n)},t_3^{(n)},t_4^{(n)} \rangle$ and $G_{23}^{(n)} = \langle t_2^{(n)},t_3^{(n)}\rangle$.
\begin{personalEnvironment}$\boldsymbol{\mkern-7mu )}$\label{(4.8)}
For $n\ge8$, $\{ t_1^{(n)},t_2^{(n)},t_3^{(n)},t_4^{(n)} \}$ is a C-string for $G_{1234}^{(n)}$.

First we may verify (\ref{(4.8)}) for $n=8$ using \textsc{Magma}. Then we may define $\mu \in \Sym{2n}$ by \begin{align*}
    \mu &: \omega \rightarrow \omega-n+8 \text{ for $\omega \in \Lambda =\{n+1,n+2,n+3,n+4,n+5,n+6,n+7,n+8\}$} \\
    &: \omega \rightarrow \omega \text{ for $\omega \in \Omega\setminus \Lambda$}.
\end{align*}

For $i=1,2,3,4$ define $$\phi(t_i^{(n)}) = \widehat{(t_i^{(n)})^\mu} $$
where $\,\widehat{  \quad\quad}\,$ denotes the induced action upon the set $\Phi = \{1,\ldots ,16\}.$ When we write equalities in this context, it is as a permutation of $\Phi$. Observe that $\phi$ extends to a homomorphism from $G_{1234}^{(n)}$ to $\Sym{\Phi}$ with $\phi(G_{1234}^{(n)}=G_{1234}^{(8)}$.

Because 
\begin{align*}
    \phi(G_{23}^{(n)}) \le \phi(G_{123}^{(n)} \cap G_{234}^{(n)}) \le\phi(G_{123}^{(n)}) \cap \phi(G_{234}^{(n)}) 
    =G_{123}^{(n)} \cap G_{234}^{(n)} 
\end{align*} 
and (\ref{(4.8)}) holds for $n=8$, we have $$\phi(G_{123}^{(n)} \cap G_{234}^{(n)}) = G_{23}^{(8)}.$$

We now investigate the structure of $H=G_{234}^{(n)}.$ Set $s_2 = t_2^{(n)}, s_3 = t_3^{(n)}, s_4 = t_4^{(n)}$ and $R= \langle (s_2s_3)^3,(s_3s_4)^3\rangle$. Now $(s_3s_4)^3$ inverts $(s_2s_3)^3$ which has order 4. Therefore $R \cong \Dih{8}.$ Calculation shows that $s_2,s_3$ and $s_4$ normalize $R$ and hence $R\trianglelefteq H$. Set $C=C_H(R)$. Further calculation shows that $s_2,s_3,s_2s_3 \notin C$ but $s_4,(s_2s_3)^2 \in C.$ Therefore, as $H = \langle s_2,s_3,s_4 \rangle$, $H=\langle s_2,s_3\rangle C$ with $H/C \cong 2^2.$ Also we have $(s_2s_3)^6 \in Z(H)$ with $s_4(s_2s_3)^4$ of order 4. Thus, as  $(s_2s_3)^4$ has order 3 and $s_4$ has order 2, $\langle (s_2s_3)^4,s_4 \rangle \cong \Sym{4}.$ Thus $S = \langle (s_2s_3)^2,s_4\rangle \cong 2 \times \Sym{4}$ with $S\le C.$ Since $s_2$ and $s_3$ normalize $S$, we infer $C=S.$ In particular, we have shown $G_{234}^{(n)} = H$ has order $2^6.3$ for all $n\ge 8.$ 

Consequently $\phi$ restricted to $G_{234}^{(n)} \rightarrow G_{234}^{(8)}$ is an isomorphism. So calculation in $G_{234}^{(n)}$ may be performed in $G_{234}^{(8)}$ and, using $\phi$, we may keep track of the action on $\Omega$. 

Now $G_{234}^{(n)}$ has orbit $\{5,6,7,8,n+5,n+6,n+7,n+8\}$ on $\Omega$ and $G_{123}^{(n)}$ has $\{8,n+8\}$ as an orbit. Thus
$$G_{23}^{(n)} \le G_{123}^{(n)} \cap G_{234}^{(n)} \le Stab({5,6,7,n+5,n+6,n+7\}})=T^{(n)}. $$

Calculation shows that $T^{(n)} = \langle G_{123}^{(n)},(6,7)(n+6,n+7)\rangle$ with $[T^{(n)}:G_{23}^{(n)}]=2$. If $G_{23}^{(n)} < G_{123}^{(n)} \cap G_{234}^{(n)},$ then $G_{123}^{(n)} \cap G_{234}^{(n)} = T^{(n)}$ must contain a normal subgroup of order 2 intersecting $G_{23}^{(n)}$ trivially (the kernel of $\phi$, restricted to $G_{123}^{(n)} \cap G_{234}^{(n)}$), but it does not. Thus $G_{23}^{(n)} = G_{123}^{(n)} \cap G_{234}^{(n)}.$ That $\{t_2^{(n)},t_3^{(n)},t_4^{(n)}\}$ is a C-string for $G_{234}^{(n)}$ follows from $G_{234}^{(n)}$ and $G_{234}^{(8)}$ being isomorphic and the fact that $\phi$ maps generator to generator. Observe that the $G_{23}-$orbits of $\Omega$ are $\{1,2,3,4\}, \{5,6,7,n+5,n+6,n+7\}, \{n+1,n+2,n+3,n+4\}, \{i,n+i\} (i=8,\ldots,n).$ If $G_{12} \cap G_{23} > G_2$ then one of 
\begin{align*}
    (t_1t_2)^4 &= (1,n+5,n+4)(2,n+3,n+6)(4,n+1,5)(6,n+2,3) \text{ and }\\
    (t_2t_3)^6 &= (1,n+1)(2,n+2)(3,n+3)(4,n+4)(5,n+5)(6,n+6)
\end{align*}
would be in $G_{23}$. But then $1$ and $n+1$ would be in the same $G_{23}-$orbit, a contradiction. Therefore $G_{12} \cap G_{23} = G_2.$
Appealing to Lemma \ref{lemma2.3}  this now proves (\ref{(4.8)}).

\end{personalEnvironment}

\begin{personalEnvironment}$\boldsymbol{\mkern-7mu )}$\label{(4.9)} Let $4 \le k \le m$ and set $J_k = I_k\setminus\{1,2\}$. Then \begin{enumerate}[$(i)$]
    \item $G_{J_k} \cong \Sym{k-2}$ and
    \item $\{t_4,\ldots,t_k\}$ is a C-string for $G_{J_k}$.
\end{enumerate}

Since $G \cong \B_n$ by (\ref{(4.7)}), $\langle t_4,\ldots , t_k \rangle = \langle \beta_1,\ldots, \beta_{k+3}\rangle$ is a standard parabolic subgroup of $G$. Hence (\ref{(4.9)})$(i)$ and $(ii)$ follow.
\end{personalEnvironment}

\begin{personalEnvironment}$\boldsymbol{\mkern-7mu )}$\label{(4.10)}
Let $4 \le k \le m$ and set $J_k = I_k\setminus\{1,2\}$.
Then \begin{enumerate}[$(i)$]
    \item $G_{J_k} \equiv \mathbb{Z}_2 \times \Sym{k-1};$ and
    \item $\{t_3,\ldots,t_k\}$ is a C-string for $G_{J_k}$.
\end{enumerate}
Recall that $t_3 = \beta_2\beta_6$, and that $t_i = \beta_{i+3}$ for $4 \le i \le k$. Since $(\beta_2\beta_6\beta_7)^3 = \beta_2$, we have 
\begin{align*}
    \langle t_3,\ldots,t_k\rangle =& \langle \beta_2\beta_6,\beta_7\ldots,\beta_{k+3}\rangle \\
    =& \langle \beta_2,\beta_6,\beta_7\ldots,\beta_{k+3}\rangle\\
    \cong& \mathbb{Z}_2 \times \Sym{k-1},
\end{align*}
so giving part $(i)$. Further, as $\langle t_3,\ldots,t_k\rangle$ is a Coxeter group,
\begin{align*}
    \langle t_3,t_4,\ldots,t_{k-1} \rangle \cap \langle t_4,\ldots t_k\rangle &=
    \langle \beta_2,\beta_6,\beta_7,\ldots,\beta_{k+2} \rangle \cap \langle \beta_7,\ldots \beta_{k+3}\rangle\\ &=
    \langle\beta_7,\ldots,\beta_{k+2}\rangle\\ &= \langle t_4,\ldots,t_{k-1} \rangle.
\end{align*}
To prove $(ii)$ we may argue by induction on $k$, $k=4$ being covered by (\ref{(4.8)}). Thus $\{t_3,t_4,\ldots,t_{k-1}\}$ is a C-string for $G_{\{3,\ldots,k-1\}}$ and, as $\{t_4,\ldots,t_k\}$ is a C-string for $G_{\{4,\ldots,k\}}$, Lemma \ref{lemma2.3} yields $(ii)$.
    
\end{personalEnvironment}

Set $t_0 = {t_4}^{(t_3t_4t_2t_3)}t_2.$

\begin{personalEnvironment}$\boldsymbol{\mkern-7mu )}$\label{(4.11)}
For $4 \le k \le m,$ set $J_k = I_k\setminus\{1\}.$ Then
\begin{align*}
    G_{J_k} =& \langle t_0,\beta_2\rangle \times \langle \beta_5,\beta_6,\ldots,\beta_{k+3}\rangle\\
    \cong& \Dih{8} \times \Sym{k}.
\end{align*}
Clearly $t_0 \in G_J$ and calculation reveals that
    $$t_0 = (1,2)(n+1,n+2)(3,4)(n+3,n+4)\prod_{i=7}^n(i,n+i).$$
Hence $t_0\beta_5 = t_2$.

Set $H = \langle t_0,\beta_2,\beta_5,\beta_6,\ldots,\beta_{k+3}\rangle$. Observing that $t_0$ and $\beta_2$ commute with each of $\beta_5,\beta_6,\ldots,\beta_{k+3},$ we have
$$H = \langle t_0,\beta_2\rangle \times \langle \beta_5,\beta_6,\ldots,\beta_{k+3}\rangle. $$
We show that $G_{J_k} = H.$ Recalling that $\beta_{i+3} = t_i$, $ 4 \le i \le m$ and $t_3 = \beta_2\beta_6$, $t_2 = t_0\beta_5$ implies $G_{J_k} \le H$. Since $t_0 = t_2\beta_5$, $\beta_5 = t_0t_2, \beta_6 = t_3(t_3t_4)^3$ and $\beta_2 = t_3\beta_6$, we also have $H \le G_{J_k}$. Because $t_0\beta_2$ has order 4 and $\langle \beta_2,\beta_6,\ldots,\beta_{k+3}\rangle$ is a standard parabolic subgroup of $\B_n$, we deduce that $G_{J_k} \cong \Dih{8} \times \Sym{k}$.
\end{personalEnvironment}

\begin{personalEnvironment}$\boldsymbol{\mkern-7mu )}$\label{(4.12)}
For $J_k=I_k\setminus\{1\}$ where $4 \le k \le m$, $\{t_2,\ldots,t_k\}$ is a C-string for $G_{J_k}$.

We argue by induction on $k$. By our induction hypothesis we have that $\{t_2,t_3,\ldots,t_{k-1}\}$ is a C-string for $G_{{J_k}\setminus\{k\}}$. From (\ref{(4.10)}) $\{t_3,\ldots,t_k\}$ is a C-string for $G_{{J_k}\setminus\{2\}}$. Also, from (\ref{(4.10)}),
\begin{align*}
    G_{{J_k}\setminus\{2,k\}} &\cong \mathbb{Z}_2 \times \Sym{n-6} \,\,\,\text{ and }\\
    G_{{J_k}\setminus\{k\}} &\cong \mathbb{Z}_2 \times \Sym{n-5}.
\end{align*}
Hence $G_{{J_k}\setminus\{2,k\}}$ is a maximal subgroup of $G_{{J_k}\setminus\{k\}}$. So, if $G_{{J_k}\setminus\{k\}} \cap G_{{J_k}\setminus\{2\}} > G_{{J_k}\setminus\{2,k\}}$, then $G_{{J_k}\setminus\{k\}} \le G_{{J_k}\setminus\{2\}}$ which means $t_2 \in G_{{J_k}\setminus\{2\}}$. But $G_{{J_k}\setminus\{2\}}$ fixes 1 whereas $t_2$ does not, a contradiction. Therefore $G_{{J_k}\setminus\{k\}} \cap G_{{J_k}\setminus\{2\}} = G_{{J_k}\setminus\{2,k\}}$. Thus, using Lemma \ref{lemma2.3}, we get (\ref{(4.12)}).
\end{personalEnvironment}

\begin{personalEnvironment}$\boldsymbol{\mkern-7mu )}$\label{(4.13)}
For $4 \le k \le m,$ $\{t_1,t_2,\ldots,t_k\}$ is a C-string for $G_{I_k}.$

We again argue by induction on $k$, with (\ref{(4.8)}) starting the induction. So $k > 4$ and $\{t_1,t_2,\ldots,t_{k-1}\}$ is a C-string for $G_{I_{k-1}}$. By (\ref{(4.12)}) we have that $\{t_2,\ldots,t_{k-1}\}$ is a C-string for $G_{I_{k-1}\setminus\{1\}}$. Then, using (\ref{(4.11)}) we have that
\begin{align*}
    G_{{I_k}\setminus\{1\}} &\cong \Dih{8} \times \Sym{k} \text{\quad and}\\
    G_{I_{k-1}\setminus\{1\}} &\cong \Dih{8} \times \Sym{k-1}.
\end{align*}

Hence $G_{I_{k-1}\setminus\{1\}}$ is a maximal subgroup of $G_{{I_k}\setminus\{1\}}$. So, if $G_{{I_k}\setminus\{1\}} \cap G_{{I_{k-1}}} > G_{{I_{k-1}}\setminus\{1\}}$, then $G_{{I_k}\setminus\{1\}} \le G_{I_{k-1}}$. But then $t_k \in G_{I_{k-1}},$ which is not the case as $t_k$ moves $k+3$ to $k+4$ which are in different $G_{{I_{k-1}}}$-orbits. Therefore $G_{{I_k}\setminus\{1\}} \cap G_{{I_{k-1}}} = G_{{I_{k-1}}\setminus\{1\}}$. Thus, using Lemma \ref{lemma2.3}, we get (\ref{(4.13)}).
\end{personalEnvironment}

\begin{personalEnvironment}$\boldsymbol{\mkern-7mu )}$\label{(4.14)}
 $\{t_1,t_2,\ldots,t_k\}$ is a C-string for $G.$
 
 Taking $m=k$ in (\ref{(4.13)}) gives (\ref{(4.14)}).
\end{personalEnvironment}

\begin{personalEnvironment}$\boldsymbol{\mkern-7mu )}$\label{(4.15)}
If $n$ is even, then  $\{t_1,\ldots,t_m\}$ is an unravelled C-string.

Set $w_0 = \prod_{i=1}^{n}(i,n+i)$, $g = (t_3(t_3t_4))^3 (= \beta_6)$ and $$h = (t_1t_2)^2t_1(t_3t_2t_1)^3t_3^{t_2}(t_1t_3t_2)^4.$$ Then $$h=\prod_{i=1}^5(i,n+i)(6,7,n+6,n+7)\prod_{i=8}^n(i,n+i).$$Hence $gh = w_0$, and $g \in G_{\{3,4\}}, h \in G_{\{1,2,3\}}$. 

Put $M_1 = \langle (i,n+i) | 1 \le i \le n \rangle$ and $M_2 = \langle (1,n+1)(i,n+i) | 1 < i \le n \rangle.$ Since $n$ is even, $\langle w_0 \rangle \le M_2 \le M_1$. Let $\overline{G} = G/N$ where $N$ is one of $\langle w_0 \rangle, M_2, M_1.$ Then $\overline{g} \overline{h^{-1}} \in \overline{G}_{\{1,2,3\}} \cap \overline{G}_{\{3,4\}}$ with $\overline{g} \ne \overline{t_3}$ whence $\overline{G}_{\{1,2,3\}} \cap \overline{G}_{\{3,4\}} > \overline{G_{\{3\}}}$, which proves (\ref{(4.14)}).
\end{personalEnvironment}

Together, (\ref{(4.14)}) and (\ref{(4.15)}) establish Theorem \ref{thm1.3}.

\end{document}